\newtheorem{proposition}{Proposition}
\newtheorem{theorem}{Theorem}
\newtheorem{lemma}{Lemma}
\newcommand{\sketchV}{V^{\natural}}
\title{Community Detection with a Subsampled Semidefinite Program}
\author[1]{Pedro Abdalla \and Afonso S. Bandeira}
\email{pedro.abdallateixeira@ifor.math.ethz.ch \and bandeira@math.ethz.ch}
\address{Department of Mathematics, ETH Z\"urich}
\date{February 2021}
\begin{document}

\maketitle

%\begin{center}
%\section*{Abstract}
%\end{center}

\begin{abstract}
Semidefinite programming is an important tool to tackle several problems in data science and signal processing, including clustering and community detection. However, semidefinite programs are often slow in practice, so speed up techniques such as sketching are often considered. In the context of community detection in the stochastic block model, Mixon and Xie~\cite{mixon2020sketching} have recently proposed a sketching framework in which a semidefinite program is solved only on a subsampled subgraph of the network, giving rise to significant computational savings. In this short paper, we provide a positive answer to a conjecture of Mixon and Xie about the statistical limits of this technique for the stochastic block model with two balanced communities.
\end{abstract}

%\newpage

\section{Introduction}\label{sec1}
Clustering problems are ubiquitous in data science. The main goal is to find a partition of the data into clusters in such form that the members in the same cluster are more similar than the members in different clusters. At the same time it is necessary to balance the clusters sizes to avoid the trivial solution of one cluster per member. 

A large body of work has focused on the stochastic block model, a random network model with a planted cluster structure, we refer the reader to~\cite{abbe2017survey} for a survey on recent developments.
We will focus on case of two balanced communities. Let $n$ be an even natural number and $G \sim \mathcal{G}(n;p,q)$ be a random graph on $n$ nodes drawn as follows: Randomly partition the set of $n$ vertices $V$ in two equally sized communities $V = S_1 \cup S_2$. For every pair of vertices, an edge is placed with probability $p$ if they belong to the same community $S_i$ and with probability $q<p$ otherwise, all independent. The goal is to exactly recover the partition $\{S_1,S_2\}$ from the graph alone.  Let the matrix $A \in \mathbb{R}^{n\times n}$ denote the adjacency matrix of the graph $G$. Considering a label vector $x\in\{\pm1\}^n$ representing community membership of notes\footnote{Note that there is a natural ambiguity in the labelling of each of the communities, thus the goal is best formulated in terms of recovering the partition; this corresponds of an ambiguity of global sign flip in $x$.}. The maximum likelihood estimator for the node labels $x$ is given by the program below~\cite{abbe2017survey},

\begin{equation}
\label{intractable_ML}
\begin{aligned}
\max_{x} \quad & x^{T}Ax\\
\textrm{s.t.} \quad & \mathbf{1}^{T}x = 0\\
  & x\in \{\pm 1\}^n    \\
\end{aligned}
\end{equation}
Here $\mathbf{1}$ denotes all-ones vector. %It can be shown that the solution of the optimization program above corresponds to the maximum likelihood estimator (up to a global sign change) of the true vector $x^{*}$, where $x^{*}_i=1$ if the vertex $i$ belongs to the community $S_1$ and $x^{*}_i =-1$ otherwise. 
Since it is well known that the problem (\ref{intractable_ML}) is NP-Hard \cite{garey1979computers}, we consider the standard semidefinite relaxation~\cite{goemans1995improved}. 

\begin{equation}
\label{SDP_relaxation}
\begin{aligned}
\max_{X \in \mathbb{R}^{n\times n}}      \quad & \mathrm{Tr}(AX)\\
\textrm{s.t.} \quad & X_{ii} = 1\\
  & X \succeq 0   \\
  & \mathrm{Tr}(X\mathbf{J}) = 0
\end{aligned}
\end{equation}

Where $X$ is a surrogate variable for $xx^{T}$ and $\mathbf{J}$ denotes all-ones matrix. The following theorem gives the sharp phase transition for the community detection problem with two balanced communities.

\begin{theorem}[Exact recovery threshold%\cite{Abbe-Bandeira-Hall} \cite{mossel2015consistency}
\cite{Abbe-Bandeira-Hall,mossel2015consistency,bandeira2018laplacian,hajek2016achieving}]
\label{abbe_bandeira_hall}
Let $G \sim \mathcal{G}(n;p,q)$ with $p =\alpha \frac{\log n}{n}$, $q=\beta\frac{\log n}{n}$ and planted communities $\{S_1,S_2\}$. Then,
\begin{itemize}
    \item (I) For $\sqrt{\alpha}-\sqrt{\beta} <\sqrt{2}$, no algorithm can exactly recover the partition with high probability.
    
    \item (II) For $\sqrt{\alpha}-\sqrt{\beta} >\sqrt{2}$, with high probability:   
    The semidefinite program (\ref{SDP_relaxation}) has a unique solution given by $X^\natural = x^{\natural} (x^\natural) ^T$ where $x^\natural$ corresponds to the memberships of the true communities, thus achieving exact recovery.
\end{itemize}
\end{theorem}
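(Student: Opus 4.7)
The theorem is the exact-recovery phase transition for the balanced two-block SBM in the logarithmic regime. My plan is to treat (I) and (II) separately: (I) is an information-theoretic lower bound reducible to a ``bad vertex'' event, while (II) is an achievability statement for the SDP, which I would attack by a dual-certificate (KKT) argument combined with matrix concentration.

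\textbf{Part (I), impossibility.} I would show that with probability tending to $1$ there exists a vertex $v \in S_1$ whose number of neighbors in $S_2$ strictly exceeds its number of neighbors in $S_1 \setminus \{v\}$. On this event, flipping the label of $v$ yields a configuration of strictly larger likelihood than the true one, which by a standard posterior argument forces the MLE---hence any estimator---to fail exact recovery. The probability that a fixed vertex is bad is the tail of a difference of independent binomials, and a sharp Chernoff/Cram\'er computation gives
\[
\mathbb{P}(\text{vertex $v$ is bad}) = n^{-(\sqrt{\alpha}-\sqrt{\beta})^2/2 + o(1)}.
\]
When $(\sqrt{\alpha}-\sqrt{\beta})^2 < 2$ this tail dominates $1/n$, so a second-moment computation over the $n/2$ vertices of $S_1$ produces at least one bad vertex with probability $\to 1$.

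\textbf{Part (II), achievability via a dual certificate.} Convex duality for (\ref{SDP_relaxation}) tells us that $X^\natural = x^\natural (x^\natural)^T$ is the unique optimum provided there exist a diagonal $D = \mathrm{diag}(d_i)$ and a scalar $\lambda$ such that
\[
S := D - A + \lambda\, \mathbf{J}
\]
satisfies (a) $S \succeq 0$, (b) $S x^\natural = 0$, and (c) $\mathrm{rank}(S) = n - 1$. Since $x^\natural$ is balanced, $\mathbf{J} x^\natural = 0$, and (b) forces the canonical choice $d_i = x^\natural_i (A x^\natural)_i$, i.e., the within-cluster minus across-cluster degree at vertex $i$; $\lambda$ remains a tuning parameter which I would choose to cancel the $\mathbf{1}$-direction of $\mathbb{E}[S]$. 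Conditions (a) and (c) then reduce to the spectral inequality $y^T S y > 0$ for every unit $y \perp x^\natural$.

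\textbf{The main obstacle.} Proving this spectral bound sharply is where the threshold appears. Matrix Bernstein gives $\|A - \mathbb{E}[A]\| = O(\sqrt{(\alpha + \beta)\log n})$ with high probability, which controls $y^T(A - \mathbb{E}[A]) y$ on $\{y \perp x^\natural, \mathbf{1}\}$. The delicate ingredient is the diagonal contribution: on this subspace the bilinear form becomes roughly $\sum_i d_i y_i^2 - y^T(A - \mathbb{E}[A])y$, so one needs $\min_i d_i$ to dominate the off-diagonal fluctuation. Since $d_1, \ldots, d_n$ are (shifted) binomial differences with mean of order $\log n$, the union bound $\mathbb{P}(\min_i d_i \le 0) \le n \cdot n^{-(\sqrt{\alpha}-\sqrt{\beta})^2/2 + o(1)}$ tends to $0$ precisely when $(\sqrt{\alpha}-\sqrt{\beta})^2 > 2$. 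The matching between the large-deviation rate of a single $d_i$ and the impossibility rate from Part (I) is exactly what makes the transition sharp; obtaining the correct Chernoff exponent (rather than a merely Bernstein-type bound) is the only step I expect to be genuinely nontrivial, the rest being routine SDP duality and concentration.
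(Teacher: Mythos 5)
Note first that the paper does not prove this theorem at all: it is quoted as Theorem~\ref{abbe_bandeira_hall} directly from the cited works \cite{Abbe-Bandeira-Hall,mossel2015consistency,bandeira2018laplacian,hajek2016achieving}, so there is no in-paper proof to compare against. Your outline does follow the standard route of those references (bad-vertex impossibility for (I), dual certificate $S = D - A + \lambda \mathbf{J}$ with $d_i = x^\natural_i (Ax^\natural)_i$ for (II)), but as written it has concrete gaps. The most serious is the spectral step in (II): at sparsity $p,q \asymp \frac{\log n}{n}$, matrix Bernstein does \emph{not} give $\|A - \mathbb{E}A\| = O(\sqrt{\log n})$; the variance parameter is $\sigma^2 \asymp np \asymp \log n$, so Bernstein only yields $O(\sqrt{\sigma^2 \log n}) = O(\log n)$, which is useless here because at the threshold $\min_i d_i$ can be as small as $o(\log n)$ (e.g.\ of order $\log n/\log\log n$). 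The correct bound $\|A-\mathbb{E}A\| = O(\sqrt{np})$ requires a Feige--Ofek/Bandeira--van Handel type argument (this is exactly what the cited proofs invoke), and without it the certificate analysis collapses. Relatedly, your criterion $\mathbb{P}(\min_i d_i \le 0) \to 0$ is not sufficient: you need $\min_i d_i$ to exceed the spectral fluctuation $C\sqrt{\log n}$ (plus the contribution of $\mathbb{E}A$ and $\lambda\mathbf{J}$ off the span of $\mathbf{1},x^\natural$), not merely to be positive. This is fixable --- shifting the Chernoff threshold by any $o(\log n)$ quantity does not change the exponent $(\sqrt{\alpha}-\sqrt{\beta})^2/2 + o(1)$ --- but the fix has to be stated, since it is precisely this interplay that makes the constant sharp.

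In Part (I) there is a smaller but genuine flaw: flipping a single bad vertex $v \in S_1$ produces an \emph{unbalanced} labelling, which lies outside the hypothesis class of balanced partitions over which the MLE (\ref{intractable_ML}) optimizes, so a single bad vertex does not by itself force the MLE to fail. The standard argument exhibits a bad vertex in $S_1$ \emph{and} a bad vertex in $S_2$ and swaps them, producing a competing balanced partition with likelihood at least that of the truth; the second-moment (or careful independence) computation must then handle both communities, and one also needs the (correct, and standard) observation that the MLE minimizes the error probability under the uniform prior so that its failure implies failure of every estimator. With the pair-swap correction, the sharper spectral norm input, and the quantitative lower bound on $\min_i d_i$, your sketch becomes essentially the proof given in \cite{Abbe-Bandeira-Hall} and \cite{hajek2016achieving}.
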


Although polynomial time, semidefinite programs tend to be computationally costly. A powerful tool to overcome computational complexity is that of sketching (we refer the reader to \cite{woodruff2014sketching} for an instance of this idea in least squares, and \cite{tropp2017sketchy,bluhm2019dimensionality} for semidefinite optimization). In the particular framework addressed in this paper, Mixon and Xie~\cite{mixon2020sketching} have recently proposed a sketching approach wherein a potentially significantly smaller semidefinite program is solved, its size depends on the community structure strength. Our main contribution is to resolve in the positive a conjecture in~\cite{mixon2020sketching} regarding the dependency of the size of the resulting semidefinite program and the community structure strength. We now describe the sketching approach in~\cite{mixon2020sketching}, which consists of a three step process, and a tuning parameter $0< \gamma <1 $.
\begin{itemize}
    \item (Step 1) Given a graph with vertex set $V$. Subsample a smaller vertex set $\sketchV$ by sampling each node in $V$ independently at random with probability $\gamma$.
    \item (Step 2) Solve the community detection problem in the subgraph induced by $\sketchV$. % the sampled with smaller, but potentially unbalanced communities $S_1\cap \sketchV$ and $S_2 \cap \sketchV$.
    \item (Step 3) For each node $v$ not in $\sketchV$ use a majority vote procedure among the neighbours of $v$ in $\sketchV$ to infer its community membership.
\end{itemize}
The main goal of this paper is to determine the minimum value of $\gamma$ such that the approach above exactly recovers both communities with high probability. The computational savings come from the reduced size of the semidefinite program and so the parameter $\gamma$ governs the computational cost of the algorithm (we refer the reader to~\cite{alizadeh1995interior} for the dependency of the computational cost of semidefinite programming on the number of variables). Mixon and Xie~\cite{mixon2020sketching}  conjectured that, as long as
\begin{equation*}
   \gamma > \frac{2}{(\sqrt{\alpha}-\sqrt{\beta})^2},
\end{equation*}
the sketching approach works with high probability. Our main result provides a positive answer for this conjecture. In particular, for $\gamma =1$, we recover the threshold in Theorem \ref{abbe_bandeira_hall} (see part II).

\section{An Oracle Bound}\label{sec2}
As described above, the sketching approach consists of three steps: Sampling, solving the community detection problem for a smaller sampled graph and then recovering the entire communities using a majority vote procedure. In this section, we analyze the Step 3 and prove that it works, for a certain range of the parameter $\gamma$, as long as we know the smaller communities in Step 2. The analysis is described in the proposition below, we refer to it as an oracle bound because it assumes the knowledge of the communities in Step 2.

\begin{proposition}
\label{oracle_bound}
Let $G \sim \mathcal{G}(n;p,q)$ with planted communities $\{S_1,S_2\}$ and with $p=\alpha\frac{\log n}{n}$ and $q=\beta\frac{\log n}{n}$ satisfying $p>q$. Draw a vertex set $\sketchV$ at random by sampling each node of the graph $G$ independently at random with probability $\gamma$. Let $R_1,R_2$ be the planted communities in the sampled graph, i.e, $R_i = S_i \cap \sketchV$ for both $i \in \{1,2\}$. Moreover, let $e(v,S)$ be the number of edges of $G$ between the vertex $v$ and the set $S \subset V(G)$ where $V(G)$ is the vertex set of the graph $G$. Now, consider
\begin{equation*}
    \hat{S}_1 = R_1 \cup \{ v\in V(G) \text{\textbackslash} \sketchV: e(v,R_1) > e(v,R_{2})\} .
\end{equation*}
\begin{equation*}
    \hat{S}_2 = R_2 \cup \{ v\in V(G) \text{\textbackslash} \sketchV: e(v,R_2) > e(v,R_{1})\} .
\end{equation*}
\
Then there exists absolute constants $C,c>0$ such that, with probability $1-Cn^{-c((\alpha+\beta)\frac{\gamma}{2}-\gamma\sqrt{\alpha\beta}-1)}$, $(\hat{S}_1,\hat{S}_2)=(S_1,S_2)$. In particular, $(\hat{S}_1,\hat{S}_2)=(S_1,S_2)$ with probability $1-o(1)$, as long as
\begin{equation*}
\gamma > \frac{2}{(\sqrt{\alpha}-\sqrt{\beta})^2}.
\end{equation*}
\end{proposition}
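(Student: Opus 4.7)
\medskip

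The plan is to fix a vertex $v \in V \setminus \sketchV$, say $v \in S_1$ by symmetry, and to show that the event $\{e(v, R_1) \leq e(v, R_2)\}$ has probability $o(1/n)$, so a union bound over the at most $n$ such vertices finishes the argument. Note that ties also count as failures, since the definitions of $\hat{S}_1$ and $\hat{S}_2$ use strict inequalities; this is harmless because the bound I will use for $X \leq Y$ automatically absorbs the tie event.

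First, I would condition on the random sizes $m_1 = |R_1|$ and $m_2 = |R_2|$. Each $m_i$ is a $\operatorname{Bin}(n/2,\gamma)$ variable, so by Chernoff $m_i = (1+o(1)) \gamma n / 2$ with probability $1 - e^{-\Omega(n)}$. On this high-probability event, and conditionally on the membership of $v$ in $S_1$, the counts $X := e(v,R_1)$ and $Y := e(v,R_2)$ are independent with $X \sim \operatorname{Bin}(m_1, p)$ and $Y \sim \operatorname{Bin}(m_2, q)$, because the edges from $v$ to $\sketchV$ have not been revealed by the sampling procedure.

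Next, I would apply the standard Chernoff-style bound: for any $t>0$,
\begin{equation*}
\mathbb{P}(X \leq Y \mid m_1,m_2) \;\leq\; \mathbb{E}\bigl[e^{t(Y-X)}\bigr] \;\leq\; \exp\!\bigl(-m_1 p (1-e^{-t}) + m_2 q (e^t-1)\bigr).
\end{equation*}
Optimizing in $t$ (the stationary point is $e^t = \sqrt{m_1 p / (m_2 q)}$) gives the clean bound
\begin{equation*}
\mathbb{P}(X \leq Y \mid m_1,m_2) \;\leq\; \exp\!\bigl(-(\sqrt{m_1 p}-\sqrt{m_2 q})^2\bigr).
\end{equation*}
Substituting $m_i = (1+o(1))\gamma n/2$, $p = \alpha (\log n)/n$ and $q = \beta(\log n)/n$ yields
\begin{equation*}
(\sqrt{m_1 p}-\sqrt{m_2 q})^2 \;=\; \tfrac{\gamma}{2} (\sqrt{\alpha}-\sqrt{\beta})^2 \log n \,\cdot\, (1+o(1)),
\end{equation*}
so the conditional failure probability is at most $n^{-\gamma(\sqrt{\alpha}-\sqrt{\beta})^2/2 + o(1)}$.

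Finally, the hypothesis $\gamma > 2/(\sqrt{\alpha}-\sqrt{\beta})^2$ ensures that this exponent is strictly larger than $1$, so multiplying by $n$ (union bound over vertices in $V \setminus \sketchV$, and by symmetry over both communities) still yields $o(1)$. Combining with the $e^{-\Omega(n)}$ bound on the concentration of $m_1,m_2$ completes the proof. The main technical point — in the sense of being what makes the constant $2/(\sqrt{\alpha}-\sqrt{\beta})^2$ sharp — is the Chernoff optimization leading to the Hellinger-like exponent $(\sqrt{m_1 p}-\sqrt{m_2 q})^2$; any looser device (e.g., Bernstein applied to $X-Y$ with its variance proxy) would give the wrong constant. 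Everything else is routine concentration and union bounding.
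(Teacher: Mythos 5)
Your proposal is correct and takes essentially the same route as the paper: a Laplace-transform/Chernoff bound on the difference of two independent binomials whose sizes are concentrated near $\gamma n/2$, giving a per-vertex failure probability $n^{-\frac{\gamma}{2}(\sqrt{\alpha}-\sqrt{\beta})^2+o(1)}$ that beats the union bound over the unsampled vertices precisely when $\gamma>2/(\sqrt{\alpha}-\sqrt{\beta})^2$. Your direct optimization at $e^{t}=\sqrt{m_1p/(m_2q)}$, yielding the exponent $(\sqrt{m_1p}-\sqrt{m_2q})^2$, is just a slightly cleaner derivation of the same bound the paper obtains in its Lemma~\ref{lemma_for_oraclebound} via an auxiliary $\varepsilon\to 0^{+}$ limit.
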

The next lemma will play a key role in the proof of Proposition \ref{oracle_bound}, it is similar to Lemma~8 in \cite{Abbe-Bandeira-Hall} but it deals with almost balanced communities, this is crucial to our analysis. 
\begin{lemma}
\label{lemma_for_oraclebound}
Suppose $\alpha > \beta >0$. Let $X$ and $Y$ be two independent random variables with $X \sim \text{Binom}(K_1, \alpha\frac{\log n}{n})$ and $Y \sim \text{Binom}(K_2,\beta \frac{\log n}{n})$, where $K_1 = \frac{n\gamma}{2} + o(n)$ and $K_2 = \frac{n\gamma}{2} + o(n)$ as $n \rightarrow \infty$. Then,
\begin{equation*}
    \mathbb{P}(X-Y \le 0) \le n^{- ((\alpha+\beta)\frac{\gamma}{2} -\gamma\sqrt{\alpha \beta}) + o(1)}.
\end{equation*}
\end{lemma}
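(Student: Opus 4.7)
The plan is to use the standard Chernoff/exponential-moment approach: bound $\mathbb{P}(X - Y \le 0)$ by $\mathbb{E}[e^{-t(X-Y)}]$ for a carefully chosen $t > 0$, use independence to split this as $\mathbb{E}[e^{-tX}] \cdot \mathbb{E}[e^{tY}]$, and then invoke the binomial moment generating function. Specifically, for $Z \sim \text{Binom}(K, r)$ we have $\mathbb{E}[e^{sZ}] = (1 - r + r e^s)^K \le \exp\bigl(Kr(e^s - 1)\bigr)$, so
\begin{equation*}
\mathbb{P}(X - Y \le 0) \le \exp\!\left( K_1 \, \tfrac{\alpha \log n}{n}(e^{-t} - 1) + K_2 \, \tfrac{\beta \log n}{n}(e^{t} - 1) \right).
\end{equation*}

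Next I would plug in $K_1, K_2 = \tfrac{n\gamma}{2} + o(n)$; the prefactors become $\tfrac{\gamma \alpha \log n}{2}(1 + o(1))$ and $\tfrac{\gamma \beta \log n}{2}(1 + o(1))$ respectively. Then optimize over $t$: differentiating $\alpha(e^{-t}-1) + \beta(e^t - 1)$ gives the stationary point $t^\star = \tfrac{1}{2}\log(\alpha/\beta)$, which is well-defined since $\alpha > \beta > 0$. Plugging $e^{-t^\star} = \sqrt{\beta/\alpha}$ and $e^{t^\star} = \sqrt{\alpha/\beta}$ in, the bracketed quantity collapses to $2\sqrt{\alpha\beta} - \alpha - \beta = -(\sqrt{\alpha} - \sqrt{\beta})^2$. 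Thus the exponent becomes
\begin{equation*}
-\frac{\gamma \log n}{2}(\sqrt{\alpha} - \sqrt{\beta})^2 (1 + o(1)) = -\left(\tfrac{(\alpha+\beta)\gamma}{2} - \gamma\sqrt{\alpha\beta}\right) \log n + o(\log n),
\end{equation*}
and exponentiating yields precisely the claimed bound $n^{-((\alpha+\beta)\gamma/2 - \gamma\sqrt{\alpha\beta}) + o(1)}$.

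The only subtlety is bookkeeping the $o(n)$ error in $K_1, K_2$: since $p, q = \Theta(\log n / n)$, the discrepancy $|K_i - n\gamma/2| = o(n)$ contributes $o(\log n)$ to the exponent, which is harmless because it absorbs into the $n^{o(1)}$ factor. This is exactly the point where our statement differs from the balanced version (Lemma~8 in \cite{Abbe-Bandeira-Hall}), and is what makes the lemma applicable in the sketched setting, where the sampled communities $R_1, R_2$ have sizes that are close to, but not exactly, $n\gamma/2$. The inequality $(1 - r + r e^s)^K \le \exp(Kr(e^s - 1))$ is tight enough here because $r = O(\log n / n) \to 0$, so no further refinement (e.g., a second-order expansion or a tilted measure argument) is needed. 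I do not expect any real obstacle; the proof is essentially a one-line Chernoff bound with an optimized tilt, and the main thing to verify carefully is that the $o(n)$ perturbation of $K_1, K_2$ indeed lands in the $o(1)$ of the exponent.
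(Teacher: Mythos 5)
Your proposal is correct and follows essentially the same route as the paper: a Chernoff/Laplace-transform bound using the binomial moment generating function, the elementary bound $\log(1+x)\le x$, and an optimized tilt, with the $o(n)$ perturbation of $K_1,K_2$ absorbed into the $n^{o(1)}$ factor. The only cosmetic difference is that you plug in the asymptotic optimizer $t^\star=\tfrac12\log(\alpha/\beta)$ directly, whereas the paper introduces an auxiliary $\varepsilon$-shift, optimizes exactly, and then lets $\varepsilon\to 0^+$; both yield the same exponent.
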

We present a simple and direct proof of this lemma.

\begin{proof} 
Let $\varepsilon >0$. We proceed with the Laplace transform method, for all $t\ge 0$ we write
\begin{equation}
\label{LaplaceTransorm}
    \mathbb{P}(X-Y \le 0) \le \mathbb{P}(X-Y \le \varepsilon) \le e^{t\varepsilon}\mathbb{E}e^{-t(X-Y)} := e^{-\psi(t)},
\end{equation}
where $\psi(t):= -t\varepsilon - \log \mathbb{E}e^{-t(X-Y)}$. Now we use the fact that the function $\psi(t)$ is additive for sums of independent random variables together with the formula for the moment generating function of a binomial distribution (Example 3.32 in \cite{wasserman2013all})

\begin{equation*}
  \log \mathbb{E}e^{-t(X-Y)} = K_1 \log (1 - p(1-e^{-t})) + K_2\log (1-q(1-e^{t})),
\end{equation*}
where $p = \alpha\frac{\log n}{n}$ and $q= \beta \frac{\log n}{n}$. Using the elementary inequality, $\log(1-x)\le -x$, valid for all $0\le x \le 1$, we get

\begin{equation*}
    \psi(t) \ge -\varepsilon t + K_1p(1-e^{-t}) + K_2q(1-e^{t}).
\end{equation*}
We pick $t^{*} = \log((2K_2q)^{-1}(-\varepsilon + \sqrt{\varepsilon^2 + 4K_1K_2pq}))$ in order to optimize the right hand side. The second term in the right hand side becomes

\begin{equation*}
  K_1p(1-e^{-t^{*}})= K_1p \left (1-\frac{2K_2q}{-\varepsilon + \sqrt{\varepsilon^2 + 4K_1K_2pq}}\right ).
\end{equation*}

We are interested in the behaviour of $\psi(t^{*})$ when $\varepsilon \rightarrow 0^{+}$, so we take the limit both sides in the equality above

\begin{equation*}
    \lim_{\varepsilon \rightarrow 0 ^{+}} K_1p(1-e^{-t^{*}}) = K_1p - \sqrt{K_1K_2pq}.
\end{equation*}

Similarly, we get

\begin{equation*}
    \lim_{\varepsilon \rightarrow 0 ^{+}} K_2q(1-e^{t^{*}}) = K_2q - \sqrt{K_1K_2pq}.
\end{equation*}

Now we can take the limit as $\varepsilon \rightarrow 0^{+}$ in inequality \ref{LaplaceTransorm} to obtain

\begin{equation*}
    \mathbb{P}(X-Y \le 0) \le e^{\lim_{\varepsilon \rightarrow 0 ^{+}}-\psi(t^{*})} \le e^{-(K_1p +K_2q -2\sqrt{K_1K_2pq})}.
\end{equation*}
Recall that $K_1 = \frac{n\gamma}{2} + o(n)$, $K_2 = \frac{n\gamma}{2}+o(n)$, $p = \alpha\frac{\log n}{n}$ and $q= \beta \frac{\log n}{n}$. Then,
\begin{equation*}
    \mathbb{P}(X-Y \le 0) \le e^{-\log(n)(\gamma\frac{\alpha+\beta}{2}-\gamma\sqrt{\alpha\beta}+o(1))}.
\end{equation*}
\end{proof}
We end this section with the proof of Proposition \ref{oracle_bound}.
\begin{proof}
We denote the success event by $\mathcal{E}$, i.e, the event that the communities are recovered and we condition on the event that $\sketchV$ has been drawn. By union bound we can write,

\begin{equation*}
\mathbb{P}(\mathcal{E}^{c} \mid \sketchV ) \le P_1 + P_2.
\end{equation*}
Here $P_1:= \sum_{v \in S_1} \mathds{1}_{\{v \in V(G) \text{\textbackslash}\sketchV\}}\mathbb{P}(e(v,R_1) - e(v,R_{2})\le 0)$ and $P_2$ is defined analogously. 

Observe that now the probability in the right hand side of $P_1$ is equal to 
\begin{equation*}
    \mathbb{P}\left(\sum_{j=1}^{K_1}B_j^{(p)} - \sum_{j=1}^{K_{2}}B_j^{(q)}\le 0 \right),
\end{equation*}
where $K_i = \mid R_i\mid$ and for all $j$, the random variables $B_j^{p} \sim \text{Ber}(p)$ and $B_j^{q} \sim \text{Ber}(q)$ are all independent. We set $X:= \sum_{j=1}^{K_1}B_j^{(p)} \sim \text{Binom}(K_1, \alpha\frac{\log n}{n})  $ and $Y:= \sum_{j=1}^{K_{2}}B_j^{(p)} \sim \text{Binom}(K_{2}, \beta\frac{\log n}{n})$. In order to apply Lemma \ref{lemma_for_oraclebound}, we denote the event in which both $K_1$ and $K_{2}$ lie in the interval $\frac{n\gamma}{2}(1\pm \frac{1}{\sqrt{\log n}}) $ by $\mathcal{A}$. So we can bound $P_1$ by

\begin{equation*}
P_1 \le \sum_{v \in S_1} \mathds{1}_{\{v \in V(G) \text{\textbackslash}\sketchV\}}(\mathds{1}_{\{\mathcal{A}^{c}\}}+ \mathds{1}_{\{\mathcal{A}\}}\mathbb{P}(X-Y\le 0 \mid \mathcal{A})).
\end{equation*}

We use the crude bound $\mathds{1}_{\{v \in V(G) \text{\textbackslash} \sketchV \}} \le 1$ and write

\begin{equation*}
P_1 \le \frac{n}{2}(\mathds{1}_{\{\mathcal{A}^{c}\}}+ \mathds{1}_{\{\mathcal{A}\}}\mathbb{P}(X-Y\le 0\mid\mathcal{A})).
\end{equation*}
It is easy to see that the same bound holds for $P_2$, so
\begin{equation*}
    \mathbb{P}(\mathcal{E}^{c}\mid\sketchV) \le P_1+P_2\le  n(\mathds{1}_{\{\mathcal{A}^{c}\}}+ \mathds{1}_{\{\mathcal{A}\}}\mathbb{P}(X-Y\le 0\mid\mathcal{A})).
\end{equation*}
We take the expectation with respect to $\sketchV$ both sides to obtain,
\begin{equation}
\label{bound1_Theorem1}
    \mathbb{P}(\mathcal{E}^{c})\le n(\mathbb{P}(\mathcal{A}^{c}) + \mathbb{E}_{\sketchV}\mathbb{P}(X-Y\le 0\mid\mathcal{A})).
\end{equation}
By Chernoff's small deviation inequality (Exercise 2.3.5 \cite{vershynin2018high}), there is an absolute constant $c>0$ such that
\begin{equation}
\label{bound2_Theorem1}
    \mathbb{P}(\mathcal{A}^{c})\le 2\mathbb{P}\left (\mid K_1 - \frac{n\gamma}{2}\mid> \frac{n\gamma}{2\sqrt{\log n}}\right ) \le 2e^{-c\frac{\gamma n}{\log n}} = o\left(\frac{1}{n}\right).
\end{equation}
By Lemma \ref{lemma_for_oraclebound},
\begin{equation}
\label{bound3_Theorem1}
    \mathbb{E}_{\sketchV}\mathbb{P}(X-Y\le 0\mid\mathcal{A}) \le n^{- ((\alpha+\beta)\frac{\gamma}{2} -\gamma\sqrt{\alpha \beta}) + o(1)}.
\end{equation}
By the assumption on $\gamma$, $(\alpha+\beta -2\sqrt{\alpha\beta})\frac{\gamma}{2}>1$. Therefore, there exists an $\varepsilon>0$ such that

\begin{equation*}
    \mathbb{P}(X-Y\le 0\mid\mathcal{A}) \le n^{-1 -\varepsilon +o(1)} = o\left(\frac{1}{n}\right).
\end{equation*}

Then we combine inequalities \ref{bound2_Theorem1} and \ref{bound3_Theorem1} with inequality \ref{bound1_Theorem1} to complete the proof.
\end{proof}

\section{Exact Recovery in the Subsampled Nodes}\label{sec3}
In the sampling procedure in Step 1, the unknown communities $S_1 \cap \sketchV$ and $S_2 \cap \sketchV$  are no longer guaranteed to be balanced, therefore we cannot directly use the optimization program (\ref{SDP_relaxation}) because the maximum likelihood estimator is no longer (\ref{abbe_bandeira_hall}). However, thanks to the authors in \cite{hajek2016achieving}, similar semidefinite programs can be used to handle this case. We follow the approach in \cite{hajek2016achieving}. 

To begin with, it is straightforward to see that if the communities have sizes $K$ and $n-K$, the maximum likelihood estimator becomes 
\begin{equation}
\label{intractable_ML_unbalanced}
\begin{aligned}
\max_{x} \quad & x^{T}Ax\\
\textrm{s.t.} \quad & \mathbf{1}^{T}x = (2K-n)\\
  & x\in \{\pm 1\}^n    \\
\end{aligned}
\end{equation}
Therefore we can relax the problem in the same as before, we set $X:=xx^{T}$ and write

\begin{equation}
\label{SDP_relaxation_unbalanced}
\begin{aligned}
\max_{X \in \mathbb{R}^{n\times n}}      \quad & \mathrm{Tr}(AX)\\
\textrm{s.t.} \quad & X_{ii} = 1\\
  & X \succeq 0   \\
  & \mathrm{Tr}(X\mathbf{J}) = (2K-n)^2
\end{aligned}
\end{equation}

We should remark that the formulation (\ref{SDP_relaxation_unbalanced}) requires the knowledge of the sizes of the communities. To overcome this problem, we consider a Lagrangian formulation

\begin{equation}
\label{SDP_relaxation_lagrangian}
\begin{aligned}
\max_{X \in \mathbb{R}^{n\times n}}      \quad & \mathrm{Tr}(AX) -\lambda^{*}\mathrm{Tr}(X\mathbf{J})\\
\textrm{s.t.} \quad & X_{ii} = 1\\
  & X \succeq 0   \\
\end{aligned}
\end{equation}

The intuition is that the Lagrange multiplier $\lambda^{*}$ adjusts the sizes of the communities. An important insight from \cite{hajek2016achieving} is the following: There exists a value of $\lambda^{*}$ that works for all values $K$, so the optimization program (\ref{SDP_relaxation_lagrangian}) can be used to recover unbalanced communities with unknown sizes. Indeed, the following proposition reflects it. We use the notation $G \sim \mathcal{G}(n_1,n_2,p,q)$ to denote a random graph drawn exactly in the same way as before with the exception that now the planted communities have sizes $n_1$ and $n_2$ satisfying $n_1+n_2 =n$ but $n_1$ is not necessarily equal to $n_2$.

\begin{proposition}\cite{hajek2016achieving}
\label{phase_transition_unbalanced}
Let $G \sim \mathcal{G}(K,n-K,p,q)$ with planted communities $\{S_1,S_2\}$ and with $p=\alpha\frac{\log n}{n}$ and $q=\beta\frac{\log n}{n}$ satisfying $p>q$. Then, for $\sqrt{\alpha}-\sqrt{\beta} > \sqrt{2}$, the semidefinite program (\ref{SDP_relaxation_lagrangian}) with  $\lambda^{*} = \left (\frac{\alpha -\beta}{\log \alpha -\log \beta} \right ) \frac{\log n}{n}$ exactly recovers the communities with probability $1-Cn^{-c(\frac{1}{2}(\sqrt{\alpha}-\sqrt{\beta})^2-1)}$, where $C,c>0$ are absolute constants.
\end{proposition}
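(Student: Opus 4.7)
The approach is the standard dual certificate framework for SDPs. To establish that $X^\natural = x^\natural (x^\natural)^T$ is the unique optimizer of the SDP (\ref{SDP_relaxation_lagrangian}), it suffices to exhibit a symmetric matrix of the form $M = D + \lambda^* J - A$, with $D$ diagonal, satisfying $M \succeq 0$, $Mx^\natural = 0$, and $\mathrm{rank}(M) = n-1$. The constraint $Mx^\natural = 0$ pins down the diagonal entries to
\begin{equation*}
d_i = x^\natural_i\bigl[(Ax^\natural)_i - \lambda^*(2K - n)\bigr],
\end{equation*}
so the task reduces to showing that this choice of $D$ makes $M$ positive semidefinite with a unique zero eigenvector along $x^\natural$, with probability $1-o(1)$.

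I would then reduce $M \succeq 0$ (with a spectral gap) to two quantitative estimates. The first is a spectral norm bound for the centered adjacency matrix, $\|A - \mathbb{E}A\| = O(\sqrt{\log n})$, which is a classical concentration result for Erd\H{o}s--R\'enyi graphs in the logarithmic degree regime. A direct computation of $\mathbb{E}[A] + \lambda^* J$ shows that on the subspace orthogonal to $x^\natural$ the deterministic part of $M$ has a spectral gap of order $\log n$; here the particular value $\lambda^* = \frac{\alpha-\beta}{\log\alpha-\log\beta}\cdot\frac{\log n}{n}$ plays a key role, as it is the natural choice making the $J$ correction cancel the contribution of $\mathbb{E}A$ in the direction of $\mathbf{1}$ regardless of the community size $K$.

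The second, and more delicate, estimate is a lower bound on $\min_i d_i$. For $i \in S_1$, one has $d_i = \sum_{j \in S_1 \setminus\{i\}} A_{ij} - \sum_{j \in S_2} A_{ij} - \lambda^*(2K-n)$, and symmetrically for $i \in S_2$. A Laplace transform / Chernoff computation in the same spirit as Lemma~\ref{lemma_for_oraclebound}, applied to $\mathrm{Binom}(K-1, p)$ and $\mathrm{Binom}(n-K, q)$ shifted by $\lambda^*(2K-n)$, yields
\begin{equation*}
\mathbb{P}(d_i \le 0) \le n^{-(\sqrt{\alpha}-\sqrt{\beta})^2/2 + o(1)}.
\end{equation*}
A union bound over the $n$ vertices forces $\min_i d_i > 0$ with probability $1-o(1)$ as soon as $(\sqrt{\alpha}-\sqrt{\beta})^2 > 2$. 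Combined with the spectral estimate via Weyl's inequality, this gives $M \succeq 0$ with kernel exactly $\mathrm{span}(x^\natural)$.

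The main obstacle is that the unbalanced sizes cause the naive Chernoff exponent for $d_i \leq 0$ to depend on $K$; the Hajek--Wu--Xu insight that I would follow is that the stated $\lambda^*$ saddle-points this exponent at $(\sqrt{\alpha}-\sqrt{\beta})^2/2$ \emph{uniformly} in $K \in \{1,\ldots,n-1\}$, so a single Lagrange multiplier certifies recovery across all unknown community sizes. Once this uniform Chernoff rate is established, the rest of the argument is a routine assembly of the diagonal positivity and the spectral norm concentration.
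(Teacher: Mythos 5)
The paper itself offers no proof of this proposition---it is imported verbatim from \cite{hajek2016achieving}---so your sketch should be measured against that reference, whose strategy (dual certificate $M=D+\lambda^{*}\mathbf{J}-A$ with $Mx^{\natural}=0$, $M\succeq 0$, $\mathrm{rank}(M)=n-1$) you correctly reproduce. Your best observation is also correct and is indeed the heart of the matter: with $\lambda^{*}$ equal to the logarithmic mean $(p-q)/(\log p-\log q)$, the Chernoff bound for the diagonal entries, evaluated at $t=\tfrac12\log(\alpha/\beta)$, gives $\mathbb{P}(d_i\le 0)\le n^{-(\sqrt{\alpha}-\sqrt{\beta})^2/2+o(1)}$ with the $K$-dependent terms cancelling exactly, so the exponent is uniform in the community sizes.

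The final assembly, however, has a genuine gap, and the justification you give for the spectral part is wrong. Since $\mathbb{E}A=\tfrac{p-q}{2}x^{\natural}(x^{\natural})^T+\tfrac{p+q}{2}\mathbf{J}-pI$, cancelling the contribution of $\mathbb{E}A$ along $\mathbf{1}$ would require $\lambda^{*}=\tfrac{p+q}{2}$, the \emph{arithmetic} mean; the logarithmic mean is strictly smaller, so $\lambda^{*}\mathbf{J}-\mathbb{E}A$ is \emph{not} positive semidefinite on $(x^{\natural})^{\perp}$: for a near-flat unit vector $x\perp x^{\natural}$ one gets $x^{T}(\lambda^{*}\mathbf{J}-\mathbb{E}A)x\approx-\bigl(\tfrac{\alpha+\beta}{2}-\bar{\lambda}\bigr)\log n<0$, where $\bar{\lambda}=\tfrac{\alpha-\beta}{\log\alpha-\log\beta}$, a deficit of order $\log n$. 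Consequently ``$\min_i d_i>0$ plus $\|A-\mathbb{E}A\|=O(\sqrt{\log n})$ plus Weyl'' cannot certify $M\succeq0$ at the sharp threshold: the order-$\log n$ deficit must be absorbed by the weighted sum $\sum_i d_i x_i^2$, i.e.\ by the \emph{bulk} of the diagonal (most $d_i$ are near $\mathbb{E}d_i\asymp\log n$), not by its minimum, and under the bare assumption $\sqrt{\alpha}-\sqrt{\beta}>\sqrt{2}$ the union-bound/Chernoff argument can only guarantee $\min_i d_i$ above levels that are $o(\log n)$ (e.g.\ $\log n/\log\log n$), never above the constant-times-$\log n$ level $\bigl(\tfrac{\alpha+\beta}{2}-\bar{\lambda}\bigr)\log n$ that this decomposition would require. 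If instead you fold $\mathbb{E}D$ into the deterministic part---which, as one can check, does have a gap of order $(\bar{\lambda}-\beta)\log n$ on $(x^{\natural})^{\perp}$ for every $K$---then the random part contains $D-\mathbb{E}D$, whose spectral norm is itself of order $\log n$, and Weyl again fails. Closing the argument requires treating the interaction between $\sum_i d_i x_i^2$ and the flat component $(\mathbf{1}^{T}x)^2$ jointly (for instance, isolating the small exceptional set of vertices where $d_i$ deviates and using that it carries little of the $\mathbf{1}$-mass); this is precisely the delicate step in \cite{hajek2016achieving} that your sketch dismisses as routine.
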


\section{Main theorem}\label{sec4}
We shall proceed to the main result of this paper. We combine the ideas in sections \ref{sec2} and \ref{sec3} to establish a complete analysis of the sketching procedure. 
\begin{theorem}[Main result]
Let $G \sim \mathcal{G}(n;p,q)$ with planted communities $\{S_1,S_2\}$ and with $p=\alpha\frac{\log n}{n}$ and $q=\beta\frac{\log n}{n}$ satisfying $p>q$. Draw a vertex set $\sketchV$ at random by sampling each node of the graph $G$ independently at random with probability $\gamma$. Denote, for $i \in \{1,2\}$, $\hat{R}_i$ to be the maximum likelihood estimators of $R_i = S_i \cap \sketchV$ obtained by running the semidefinite program \ref{SDP_relaxation_lagrangian} with the input matrix $A$ being the adjacency matrix of the graph $H \subset G$ induced by $\sketchV$ and the parameter $\lambda^{\ast}$ chosen as follows: In the event that $\mid \sketchV \mid \ge 2$, set $\lambda^{\ast} = \frac{\alpha_H - \beta_H}{\log \alpha_H - \log \beta_H}\frac{\log \mid\sketchV\mid}{\mid\sketchV\mid}$, where $\alpha_{H}:= \frac{p \mid \sketchV \mid}{\log \mid \sketchV \mid}$ and $\beta_{H}:= \frac{q\mid\sketchV\mid}{\log \mid\sketchV\mid}$, otherwise set $\lambda^{\ast}=0$. Now take  
\begin{equation*}
    \hat{S}_1 = \hat{R}_1 \cup \{ v \in V(G)\text{\textbackslash} \sketchV : e(v,\hat{R}_1)>e(v,\hat{R}_{2})\}.
\end{equation*}
\begin{equation*}
    \hat{S}_2 = \hat{R}_2 \cup \{ v \in V(G)\text{\textbackslash} \sketchV : e(v,\hat{R}_2)>e(v,\hat{R}_{1})\}.
\end{equation*}
Then there exists absolute constants $C,c>0$ such that, with probability $$1-Cn^{-c((\alpha+\beta)\frac{\gamma}{2}-\gamma\sqrt{\alpha\beta}-1)},$$ $(\hat{S}_1,\hat{S}_2)=(S_1,S_2)$. In particular, with probability $1-o(1)$, $(\hat{S}_1,\hat{S}_2)=(S_1,S_2)$  as long as
\begin{equation*}
\gamma > \frac{2}{(\sqrt{\alpha}-\sqrt{\beta})^2}.
\end{equation*}
\end{theorem}
\begin{proof}
Observe that after sampling the vertex set $V(G)$ of the graph, the induced subgraph $H \subset G$ is a random graph with law $H \sim \mathcal{G}(S_1\cap \sketchV, S_2\cap \sketchV,p,q)$. 
We claim that there exists a $\lambda^{*}$ such that the optimization program \ref{SDP_relaxation_lagrangian} recovers both communities $S_1\cap \sketchV$ and $S_2\cap \sketchV$ with the desired probability. The proof of the theorem easily follows from the claim by applying Proposition \ref{oracle_bound} and union bound.

Now, we proceed to prove the claim. In order to apply Proposition \ref{phase_transition_unbalanced} we need to check that, with sufficiently large probability,

\begin{equation}
\label{last_condition}
    \sqrt{\alpha_{H}}- \sqrt{\beta_{H}} > \sqrt{2},
\end{equation}
where $\alpha_{H}:= \frac{p \mid \sketchV \mid}{\log \mid \sketchV \mid}$ and $\beta_{H}:= \frac{q\mid\sketchV\mid}{\log \mid\sketchV\mid}$ if $\mid\sketchV\mid\ge 2$ and zero otherwise. Recall, by definition, $p=\alpha \frac{\log n}{n}$ and $q=\beta \frac{\log n}{n}$. The degenerate event $\mid\sketchV\mid\le 1$ (empty set or single vertex) occurs with exponentially small probability. Indeed, observe $\mid\sketchV\mid$ is a sum of $n$ i.i.d random variables with Bernoulli distribution with mean $\gamma$, so
\smallskip

\begin{equation*}
\mathbb{P}(\mid\sketchV\mid \le 1) = (1-\gamma)^{n}+n(1-\gamma)^{n-1}\gamma \le 2e^{-\gamma(n-1)+\log n},
\end{equation*}
and
\begin{equation*}
    \mathbb{P}\left (\alpha_{H}= \alpha \frac{\mid\sketchV\mid\log n}{n\log \mid\sketchV\mid} \cap \mid\sketchV\mid\ge 2\right) = 1-2e^{-\gamma(n-1) +\log n}.
\end{equation*}
\smallskip

An analogous fact holds for $\beta_{H}$, so the event
\begin{equation*}
    \left\{\sqrt{\alpha_H}-\sqrt{\beta_{H}}= \sqrt{\frac{\mid\sketchV\mid\log n}{n\log \mid\sketchV\mid}}\left(\sqrt{\alpha}-\sqrt{\beta}\right)\right\}\cap \left \{\mid\sketchV\mid\ge 2\right\},
\end{equation*}
occurs with exponentially large probability. Since $\frac{\log n}{\log \mid\sketchV\mid}\ge 1$ (when the quotient makes sense), it is enough to prove that, with the desired probability,

\begin{equation*}
   \left (\sqrt{\frac{\mid\sketchV\mid}{n}} \right) \left(\sqrt{\alpha}-\sqrt{\beta}\right) > \sqrt{2}.
\end{equation*}
By assumption, there exists a $\delta >0$ such that $\sqrt{\alpha}-\sqrt{\beta}\ge \sqrt{\frac{2}{\gamma}}(1+\delta)$ and by the small Chernoff deviation inequality, for every $\varepsilon>0$, $\mathbb{P}\left(\frac{\mid\sketchV\mid}{n}\ge \gamma -\varepsilon \right ) \ge 1- 2e^{-c\varepsilon^2n\gamma^3}$. Putting these three facts together, we obtain, for every $\varepsilon>0$, 
\begin{equation*}
   \left (\sqrt{\frac{\mid\sketchV\mid\log n}{n\log \mid\sketchV\mid}} \right) \left(\sqrt{\alpha}-\sqrt{\beta}\right) \ge \sqrt{2}(1+\delta)\sqrt{1-\frac{\varepsilon}{\gamma}},
\end{equation*}
with exponentially large probability. We choose $\varepsilon>0$ small enough to guarantee that $(1+\delta)\sqrt{1-\frac{\varepsilon}{\gamma}} >\sqrt{1+\delta}$ and then inequality \eqref{last_condition} is satisfied with the desired probability. The claim now follows from Proposition \ref{phase_transition_unbalanced}.
\end{proof}

\section*{Acknowledgment}
The authors would like to thank Dustin Mixon, Kaiying Xie and Nikita Zhivotovsky for helpful discussions. The authors would also like to thank anonymous referees for valuable comments that improved the manuscript.

%\bibliographystyle{plain}
%\bibliography{Arxiv_SBM.bib}

\end{document}